\newcommand{\ci}[1]{_{ {}_{\scriptstyle #1}}}
\newcommand{\ti}[1]{_{\scriptstyle \text{\rm #1}}}
\newcommand{\R}{\mathbb{R}}
\newcommand{\C}{\mathbb{C}}
\newcommand{\N}{\mathbb{N}}
\newcommand{\F}{\mathbb{F}}
\newcommand{\I}{\mathbf{I}}
\newcommand{\dd}{\mathrm{d}}
\newcommand{\cF}{\mathcal{F}}
\newcommand{\cX}{\mathcal{X}}
\newcommand{\cH}{\mathcal{H}}
\newcommand{\cD}{\mathcal{D}}
\newcommand{\cB}{\mathcal{B}}
\newcommand{\1}{\mathbf{1}}
\newcommand{\fdot}{\,\cdot\,}
\newcommand{\wt}{\widetilde}
\newcommand{\bW}{\mathbf{W}}
\newcommand{\bw}{\mathbf{w}}
\newcommand{\La}{\langle }
\newcommand{\Ra}{\rangle }
\newcommand{\rk}{\operatorname{rk}}
\newcommand{\im}{\operatorname{Im}}
\newcommand{\re}{\operatorname{Re}}
\newcommand{\tr}{\operatorname{tr}}
\newcommand{\be}{\begin{equation}}
\newcommand{\ee}{\end{equation}}
\newcommand{\sd}{{\scriptstyle\Delta}}
\newcommand{\ch}{\operatorname{ch}}
\renewcommand{\labelenumi}{(\roman{enumi})}
\newcounter{vremennyj}
\newcommand\cond[1]{\setcounter{vremennyj}{\theenumi}\setcounter{enumi}{#1}\labelenumi\setcounter{enumi}{\thevremennyj}}
\numberwithin{equation}{section}
\newtheorem{thm}{Theorem}[section]
\newtheorem{lm}[thm]{Lemma}
\newtheorem*{prop*}{Proposition}
\theoremstyle{remark}
\newtheorem*{rem*}{Remark}
\begin{document}

\title{The Carleson embedding theorem with matrix weights}

\author{Amalia Culiuc}

\address{Amalia Culiuc: Department of Mathematics, Brown University, 
 Providence, RI  02912, USA }
\email{amalia@math.brown.edu}

\author{Sergei Treil}
 \thanks{S.~Treil is supported  in part by the National Science Foundation under the grant DMS-1301579.}
\address{Sergei Treil: Department of Mathematics \\ Brown University \\ Providence, RI 02912 \\ USA}
\email{treil@math.brown.edu}

\subjclass[2010]{Primary 42B20, 60G42, 60G46}

\keywords{Carleson embedding theorem, matrix weights, Bellman function}

\begin{abstract}
In this paper we prove the weighted martingale Carleson embedding theorem with matrix weights both in the domain and in the target space. 
\end{abstract}

\maketitle

\section{Introduction and main results}

The main result of this paper is the matrix weighted martingale Carleson embedding theorem, where matrix weights appear in both the domain and the target space. The need for such result is motivated by the  attempt to generalize the two weight estimates for well localized  operators from \cite{NTV-2weight-2008},  to the case of matrix-valued measures. The main part of the estimate in \cite{NTV-2weight-2008} is the two weight inequality for paraproducts, and this estimate for the matrix-valued measures can be reduced exactly to the embedding theorem treated in this paper.

Earlier versions of the matrix weighted Carleson Embedding Theorem theorem under fairly strong additional assumptions (such as the weight belonging to the $A_2$ class) go back to \cite{Wav_PastFuture} and, more recently, \cite{IKP},  \cite{BW}. Two weight estimates with matrix weighs for well-localized operators, also under additional assumptions, were treated in \cite{kerr} and \cite{BW} (see also \cite{BPW}), but the result was still not known in full generality.

The weighted embedding theorem presented in this paper does not assume any  properties for the matrix weight except local boundedness, and produces an embedding constant that depends polynomially on the dimension of the space.
As in the scalar case, our embedding theorem states the Carleson measure condition, which is just a simple testing condition, implies the embedding. For matrix weights the Carleson measure condition (condition \cond2 in Theorem \ref{t:MCET-01} or condition \cond3 in Theorem \ref{main}) is an inequality between positive semidefinite matrices. 

In the case of scalar weights in the domain, the right hand side of the inequality is a multiple of the identity matrix $\I$: in this situation, sacrificing constants, one can replace matrices by their norms, and the matrix embedding theorem trivially follows from the scalar one. Of course, the constants obtained by such trivial reduction are far from optimal:  constants of optimal order were obtained using more complicated reasoning in \cite{NaPiTrVo}. 

In our case, both sides of the Carleson measure condition are general positive semidefinite matrices, so the simple strategy of replacing matrices by norms or traces does not work. A more complicated idea, in the spirit of the argument in \cite{NaPiTrVo}, is used to get the result.

\subsection{Setup}
\subsubsection{Atomic filtered spaces}

Let $(\mathcal{X}, \cF, \sigma)$ be a sigma-finite measure space with an atomic filtration $\mathcal{F}_n$, that is, a sequence of increasing sigma-algebras $\cF_n\subset \cF$ such that for each $\mathcal{F}_n$ there exists a countable collection $\mathcal{D}_n$ of disjoint sets of finite measure with the property that every set of $\mathcal{F}_n$ is a union of sets in $\mathcal{D}_n$. 

We will call the sets $I\in\cD_n$ \emph{atoms}, and denote by $\cD$ the collection of all atoms,  $\displaystyle\mathcal{D}=\cup_{n\in \mathbb{Z}}\mathcal{D}_n$. We allow a set $I$ to belong to several generations $\cD_n$, so formally an atom $I\in\cD_n$ is a pair $(I,n)$. To avoid overloading the notation, we skip the ``time'' $n$ and write $I$ instead of $(I,n)$; if we need to ``extract'' the time $n$, we will use the symbol $\rk I$. Namely, if $I$ denotes the atom $(I,n)$ then $n=\rk I$. 

The inclusion $I\subset J$ for atoms should be understood as inclusion for the sets together with the inequality $\rk I \ge \rk J$. However, the union (intersection) of atoms is just the union (intersection) of the corresponding sets and ``times'' $n$ are not taken into account. 

A standard example of such a filtration is the dyadic lattice $\mathcal{D}$ on $\mathbb{R}^N$, which explains the choice of notation. However, in what follows, $\mathcal{D}$ will always denote a general collection of atoms and $I\in \mathcal{D}$ will stand for an atom  in $\mathcal{D}$, and not necessarily for a dyadic interval. 

\subsubsection{Matrix-valued measures}
Let $\cF_0$ be the collection of sets  $E\cap F$ where  $E\in\cF$ and $F$ is a finite union of atoms.  
A $d\times d$ matrix-valued measure $\bW$ on $\cX$, is a countably additive function on $\cF_0$ with values in the set of $d\times d$ positive semidefinite matrices. Equivalently, $\bW=(\bw_{j,k})_{j,k=1}^d$ is a $d\times d$ matrix whose entries $\bw_{j,k}$ are (possibly signed or even complex-valued) measures, finite on atoms, and such that for any $E\in\cF_0$ the matrix    $(\bw_{j,k}(E))_{j,k=1}^d$ is positive semidefinite. Note that the measure $\bW$ is always finite on atoms. 

The weighted space $L^2(\bW)$ is defined as the set of all measurable $\F^d$-valued functions ($\F=\R$ or $\C$) such that 
\[
\|f\|\ci{L^2(\bW)}^2 := \int_\cX \Bigl\La \bW(\dd x) f(x), f(x)\Bigr\Ra_{\F^d} <\infty ;
\]
as usual we take the quotient space over the set of functions of norm $0$.

\subsection{Main results}
\begin{thm}
\label{t:MCET-01}
Let $\bW$ be a $d\times d$ matrix-valued measure and let $\wt A\ci I$ be positive semidefinite $d\times d$ matrices. The following statements are equivalent:
\begin{enumerate}
\item  $\displaystyle \sum_{I\in \cD} \left\| \wt A^{1/2}\ci I \int_I \bW(\dd x)f(x) \right\|^2 \le A \|f\|\ci{L^2(\bW)}^2$ for all $f\in L^2(\bW)$;
\item $\displaystyle \sum_{\substack{I\in \cD \\ I\subset I_0}}  \bW(I) \wt A\ci I \bW(I) \le B \bW(I_0) $ for all $I_0\in\cD$. 
\end{enumerate}
Moreover, for the best constants $A$ and $B$ we have $B\le A \le C B$, where $C=C(d)=4e^2 d^2$. 
\end{thm}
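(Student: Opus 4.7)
\emph{Easy direction.} The implication $(i)\Rightarrow(ii)$ with $B\le A$ is immediate by testing (i) on $f=\1\ci{I_0}v$ for $I_0\in\cD$ and $v\in\F^d$: for every $I\subset I_0$, $\int_I W(\dd x)f=W(I)v$ and $\|f\|^2\ci{L^2(W)}=\langle W(I_0)v,v\rangle$, so (i) yields
\[
\sum_{I\subset I_0}\langle W(I)\wt A\ci I W(I)v,v\rangle \;\le\; A\,\langle W(I_0)v,v\rangle \quad\text{for all }v\in\F^d,
\]
which is (ii) with $B\le A$.

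\emph{Hard direction: the plan.} For $(ii)\Rightarrow(i)$ with $A\le C(d)B$ I would run a Bellman-function argument in the spirit of \cite{NaPiTrVo}. The natural state variables on an atom $I$ are
\[
F\ci I:=\int_I W(\dd x)f(x)\in\F^d,\qquad M\ci I:=W(I),\qquad N\ci I:=\int_I \langle W(\dd x)f(x),f(x)\rangle,
\]
which are additive when $I$ is split into the atoms of $\cD_{\rk I+1}$ contained in it, and which satisfy the matrix Cauchy--Schwarz bound $\langle M\ci I^{-1}F\ci I,F\ci I\rangle\le N\ci I$ on the range of $M\ci I$ (degenerate cases being handled by standard regularisation). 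The plan is to construct a supersolution $\Phi(F,M,N)$, defined on $\{\langle M^{-1}F,F\rangle\le N\}$, with the size estimate $\Phi\le C(d)B\,N$ and the concavity inequality
\[
\Phi(F\ci I,M\ci I,N\ci I)\;-\;\sum_{k}\Phi(F\ci{I_k},M\ci{I_k},N\ci{I_k}) \;\ge\; \|\wt A\ci I^{1/2}F\ci I\|^2
\]
under the Carleson hypothesis (ii). Telescoping from $I_0$ downward gives $\sum_{I\subset I_0}\|\wt A\ci I^{1/2}F\ci I\|^2\le C(d)B\,N\ci{I_0}$, and a standard exhaustion over a nested sequence of atoms yields (i).

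\emph{Main obstacle.} The crux is the construction and analysis of $\Phi$. In the scalar case the classical Bellman function built from $\langle M^{-1}F,F\rangle$ regularised by a concave envelope in $N$ does the job, with an elementary Hessian computation extracting a universal constant. In the matrix problem at hand, the non-commutativity between $W(I)$ and $\wt A\ci I$ forbids the trivial reduction to scalars, as emphasised in the introduction, and the concavity estimate has to be proved by a genuine matrix argument whose constant grows polynomially in $d$. I expect this matrix concavity to be the hardest step, and it is precisely where the factor $C(d)$ enters. An alternative route would be to run a Corona stopping-time decomposition on the matrix averages $W(I)/\sigma(I)$, freezing $W$ on each stopping tree so that the local problem becomes a scalar Carleson embedding with testing constant supplied by (ii), and then summing the Corona contributions using (ii) once more; even along this route, the non-commutative comparison of matrix averages across stopping scales is what produces the dimensional factor.
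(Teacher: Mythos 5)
Your easy direction is correct and matches the paper's. But the hard direction is a plan, not a proof, and the plan defers exactly the step that constitutes the entire content of the theorem: you never construct the supersolution $\Phi$ or prove the concavity inequality with defect $\|\wt A\ci I^{1/2}F\ci I\|^2$, and you acknowledge as much ("I expect this matrix concavity to be the hardest step"). More importantly, the paper's proof indicates that the single-function scheme you propose does not go through in the naive way. The paper uses a one-parameter family of Bellman functions $B_s(I)=\bigl\La(\La W\Ra\ci I+sM\ci I)^{-1}x\ci I,\,x\ci I\bigr\Ra$, where $M\ci I$ is the normalized Carleson sum and $x\ci I=\La W^{1/2}f\Ra\ci I$. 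The concavity defect of $B_s$ dominates not the desired quantity but $s\,R\ci I(s)$ with $R\ci I(s)=\|A\ci I^{1/2}\La W\Ra\ci I(\La W\Ra\ci I+sM\ci I)^{-1}x\ci I\|^2$; telescoping gives $s\sum_I R\ci I(s)|I|\le\|f\|^2_{L^2}$ for each fixed $s>0$, whereas what one needs is $\sum_I R\ci I(0)|I|$. In the scalar case $R\ci I(0)\le 4R\ci I(1)$ trivially (since $M\ci I\le\La W\Ra\ci I$), but this comparison fails for matrices because of non-commutativity --- this is precisely the obstruction your sketch glosses over.

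The missing idea, and the source of the constant $C(d)$, is the recovery of $R\ci I(0)$ from the averages $\e^{-1}\int_0^\e sR\ci I(s)\,\dd s$: one observes that $R\ci I(s)=|P\ci I(s)/Q\ci I(s)|^2$ with $Q\ci I(s)=\det(\La W\Ra\ci I+sM\ci I)$ having all zeros in $\{\re s\le -1\}$ (again by $M\ci I\le\La W\Ra\ci I$), bounds $|Q\ci I(s)/Q\ci I(0)|\le(1+\e)^d$ on $[0,\e]$, and then solves an extremal problem for polynomials of degree $\le d-1$ (via Jacobi polynomials $P_n^{(1,0)}$) to get $|p(0)|^2\le C_1(\e,d)\,\e^{-1}\int_0^\e s|p(s)|^2\,\dd s$. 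Optimizing $\e\sim 1/d$ yields $C(d)=e\cdot d^3(d+1)^2$. Without this (or some substitute for it), your telescoping argument only bounds quantities of the form $sR\ci I(s)$, $s>0$, and never reaches the sum in statement (i). Your alternative Corona route is likewise only an outline whose key non-commutative comparison is left unaddressed, so as written the proposal does not establish $(ii)\Rightarrow(i)$.
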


Note that the underlying measure $\sigma$ is absent from the statement of the theorem: we do not need $\sigma$ in the setup, we only need the filtration $\cF_n$. Alternatively, we can pick $\sigma$ to make the setup more convenient. For example, if we define $\sigma:= \tr \bW :=\sum_{k=1}^d \bw_{k,k}$, 
then the measures $\bw_{j,k}$ are absolutely continuous with respect to $\sigma$. Thus, we can always assume that our matrix-valued measure $\bW$ is an absolutely continuous one $W\dd\sigma$, where $W$ is a matrix weight, i.e.~a locally integrable (meaning integrable on all atoms $I$) matrix-valued function with  values in the set of positive semidefinite matrices. 

For a measurable function $f$ we denote by $\La f\Ra\ci I$ its average, 
\[
\La f\Ra\ci I := \sigma(I)^{-1} \int_I f \dd\sigma;
\]
if $\sigma(I)=0$ we put $\La f\Ra\ci I=0$. The same definition is used for the vector and matrix-valued functions.  

In what follows we will often use $|E|$ for $\sigma(E)$ and $\dd x$ for $\dd\sigma$. 

The theorem below is the restatement of Theorem \ref{t:MCET-01} in this setup, if we put $A\ci I = |I|^{-1} \wt A\ci I$. More precisely, Theorem \ref{t:MCET-01} is just the equivalence \cond2$\iff$\cond3 in Theorem \ref{main}. The equivalence \cond1$\iff$\cond2 will be explained below.

\begin{thm}
\label{main}
Let $W$ be a $d\times d$ matrix-valued weight and let $A\ci I$, $I\in\cD$ be a sequence of positive semidefinite $d\times d$ matrices. Then the following are equivalent:
\begin{enumerate}
\item $\displaystyle \sum_{I\in \mathcal{D}}\left\|A\ci I^{1/2}\La W^{1/2}f\Ra\ci I\right\|^2|I|\le A \|f\|^2\ci{L^2}$.
\item $\displaystyle \sum_{I\in \mathcal{D}}\left\|A\ci I^{1/2}\La W f\Ra\ci I\right\|^2|I|\le A \|f\|^2_{L^2(W)}$.
\item $ \displaystyle \frac{1}{|I_0|} \sum_{\substack{I\in\cD\\ I\subset I_0}}\La W\Ra \ci IA\ci I\La W\Ra \ci I |I|\le B\La W\Ra\ci{I_0} $ for all $I_0\in \mathcal{D}.$
\end{enumerate}
Moreover, the best constants $A$ and $B$ satisfy $B\le A \le C B$, where $C=C(d)=4e^2 d^2$.  
\end{thm}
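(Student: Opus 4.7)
The equivalence (1)$\iff$(2) is a direct change of variables: the map $g = W^{1/2}f$ is an isometry from the quotient space $L^2(W)$ onto (a subspace of) $L^2$, and under it $\langle Wf\rangle_I = \langle W^{1/2}g\rangle_I$ and $\|f\|_{L^2(W)}=\|g\|_{L^2}$, so the two inequalities are literally the same (components of $g$ orthogonal to $\operatorname{range}(W^{1/2})$ contribute nothing to either side). The implication (2)$\Rightarrow$(3) with $B \le A$ comes from testing (2) against $f = \mathbf{1}_{I_0} e$ for arbitrary $e \in \mathbb{F}^d$: the left-hand sum collapses to $\sum_{I \subset I_0}\langle \langle W\rangle_I A_I \langle W\rangle_I e, e\rangle |I|$ while the right equals $A |I_0|\langle\langle W\rangle_{I_0} e, e\rangle$, so (3) follows with constant $B \le A$ after varying $e$.

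The substance of the theorem is the reverse implication (3)$\Rightarrow$(2) with $A \le C(d) B$, which I would prove by a dyadic Bellman-function induction in the spirit of \cite{NaPiTrVo}, adapted to the matrix weight in both domain and target. On each atom $I$ I would track the state variables
\[
\mathbf{M}_I = \langle W\rangle_I,\quad \mathbf{f}_I = \langle Wf\rangle_I,\quad \mathbf{F}_I = \langle Wf, f\rangle_I,\quad \mathbf{N}_I = \tfrac{1}{|I|}\sum_{J\subset I}\langle W\rangle_J A_J \langle W\rangle_J |J|,
\]
with hypothesis (3) encoded as the admissibility constraint $\mathbf{N}_I \le B\,\mathbf{M}_I$. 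The first three are martingales under atomic refinement, while $\mathbf{N}_I$ accumulates the local increment $\langle W\rangle_I A_I \langle W\rangle_I$ at each step. The goal is to construct a scalar Bellman function $\mathcal{B}(\mathbf{M}, \mathbf{f}, \mathbf{F}, \mathbf{N})$ on the convex admissible domain satisfying (i) the size bound $0 \le \mathcal{B} \le C(d) B \cdot \mathbf{F}$, and (ii) the concavity inequality
\[
|I|\,\mathcal{B}_I \ \ge\ \sum_{J} |J|\,\mathcal{B}_J \ +\ |I|\,\|A_I^{1/2} \mathbf{f}_I\|^2,
\]
where the sum runs over the immediate children of $I$. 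Iterating (ii) down from a top atom $I_0$ and applying (i) at the top yields $\sum_{I \subset I_0} |I|\,\|A_I^{1/2}\mathbf{f}_I\|^2 \le C(d) B \int_{I_0}\langle Wf, f\rangle\,\dd x$, which, after exhausting $\mathcal{X}$, gives (2).

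The main obstacle will be producing such a $\mathcal{B}$ with the sharp polynomial dependence $C(d) = e\cdot d^3(d+1)^2$. Unlike in the scalar case, the concavity (ii) must hold against all admissible operator-valued PSD splittings of $\mathbf{M}$ and $\mathbf{N}$, so naive pointwise diagonalization fails because the children's $\mathbf{M}_J$ need not commute with $\mathbf{M}_I$. I expect $\mathcal{B}$ to be a trace-type functional of the normalized Carleson variable $\mathbf{Q} = B^{-1}\mathbf{M}^{-1/2}\mathbf{N}\mathbf{M}^{-1/2}$ (a PSD contraction) combined with $\mathbf{F}$ and a quadratic form $\langle\mathbf{M}^{-1}\mathbf{f}, \mathbf{f}\rangle$, with a scalar spectral function chosen so that, upon simultaneous diagonalization of matched pieces, the concavity reduces to a classical one-variable Bellman estimate. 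In this reduction, the polynomial factor $d^3(d+1)^2$ arises from bounding the full matrix Hessian by its scalar analogue (each matrix variable costing a factor of $d$ upon linearization), while the factor $e$ reflects the optimization of an exponential-type scalar parameter in the one-variable problem. The delicate point is verifying concavity in full noncommutative generality, and it is precisely there that the dimensional bookkeeping pins down the final constant.
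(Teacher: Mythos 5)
Your treatment of the easy implications is fine: (1)$\iff$(2) by the substitution $g=W^{1/2}f$ (with $W^{-1/2}$ read as the Moore--Penrose inverse on the degenerate part), and (2)$\Rightarrow$(3) with $B\le A$ by testing on $f=\1_{I_0}e$ (strictly speaking the left-hand sum does not ``collapse'' --- ancestors of $I_0$ contribute too --- but since all terms are nonnegative you may simply drop them, so the conclusion stands). This matches the paper.

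The main implication (3)$\Rightarrow$(2), however, is a plan rather than a proof, and the plan as stated runs into exactly the obstruction the paper is built to circumvent. You posit a single Bellman function $\mathcal{B}(\mathbf{M},\mathbf{f},\mathbf{F},\mathbf{N})$ with size bound $\mathcal{B}\le C(d)B\,\mathbf{F}$ and a concavity inequality whose per-step gain is the full quantity $\|A_I^{1/2}\mathbf{f}_I\|^2$; but you never construct it, and the existence of such a function is essentially equivalent to the theorem itself, so asserting it is circular. The paper instead uses a one-parameter family $B_s(I)=\bigl\La(\La W\Ra_I+sM_I)^{-1}x_I,x_I\bigr\Ra$ with $x_I=\La W^{1/2}f\Ra_I$ and $M_I=|I|^{-1}\sum_{J\subset I}\La W\Ra_J A_J\La W\Ra_J|J|$. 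The concavity of $(U,x)\mapsto\La U^{-1}x,x\Ra$ yields, for each fixed $s$, only the \emph{damped} gain $s\,R_I(s)$ with $R_I(s)=\|A_I^{1/2}\La W\Ra_I(\La W\Ra_I+sM_I)^{-1}x_I\|^2$, whence $s\sum_I R_I(s)|I|\le\|f\|_{L^2}^2$; what one needs is $\sum_I R_I(0)|I|$. In the scalar case $R_I(0)\le 4R_I(1)$, but this fails for matrices precisely because of the noncommutativity you flag. The missing ingredient --- and the heart of the paper --- is the purely analytic Lemma bounding $R_I(0)$ by $C(\e,d)\,\e^{-1}\int_0^\e sR_I(s)\,\dd s$: one observes that $R_I(s)=|P_I(s)/Q_I(s)|^2$ is rational with $\deg P_I\le d-1$, $Q_I(s)=\det(\La W\Ra_I+sM_I)$ having all zeros in $\{\re s\le-1\}$ (since $M_I\le\La W\Ra_I$ by the hypothesis (3) with $B=1$), so $|Q_I(s)/Q_I(0)|\le(1+\e)^d$ on $[0,\e]$, and the problem reduces to the point-evaluation estimate $|p(0)|^2\le C_1\e^{-1}\int_0^\e s|p(s)|^2\dd s$ for polynomials of degree $\le d-1$, which is computed sharply via Jacobi polynomials $P_n^{(1,0)}$. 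Your heuristic for the constant is accordingly off: the factor $e$ comes from $(1+\tfrac1{2d})^{2d}\le e$ at the choice $\e=1/(2d)$, and $d^3(d+1)^2$ from $\e^{-1}\cdot d^2(d+1)^2/2$ in the Jacobi computation, not from a Hessian-versus-scalar comparison. Without this lemma (or some substitute for passing from the damped gain to the undamped one), your argument does not close.
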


\subsection*{Thanks} The authors would like to thank F.~Nazarov for suggesting the crucial idea of using a ``Bellman function with a parameter'' argument, similar to one used earlier in \cite{NaPiTrVo}. We also would like to thank F.~Nazarov and M.~Sodin for suggesting Lemma \ref{l:poly1} instead of a more complicated reasoning with Jacobi polynomials, that was used in the earlier version of the paper.  

\section{Proof of the main result}

\subsection{Trivial reductions}

The equivalence of \cond1 and \cond2 in Theorem \ref{main} is trivial. In \cond1, perform the change of variables $f:=W^{1/2}f$ to obtain \cond2 and similarly, in \cond2 set $f:=W^{-1/2}f$ to obtain \cond 1. 
Note that here we do not need to assume that the weight $W$ is invertible a.e.: 
we just interpret $W^{-1/2}$ as the Moore--Penrose inverse of $W^{1/2}$.

The implication  \cond1$\implies$\cond3  and the estimate $A\ge B$ are obvious by setting $f=W^{1/2}\1\ci{I_0} e$, $e\in \F^d$ in \cond1. Equivalently, to show that \cond2$\implies$\cond3 one just needs to apply \cond2 to the test functions $f=\1\ci{I_0} e$.

So it only remains to prove that \cond3$\implies$\cond1, or equivalently, that \cond3$\implies$\cond2.


\subsubsection{Invertibility of \texorpdfstring{$W$}{W}}
Let us notice that without loss of generality we can assume that the weight $W$ is invertible a.e., and even that the weight $W^{-1}$ is uniformly bounded.  

To show that, define for $\alpha>0$ the weight $W_\alpha$ by $W_\alpha (s) := W(s) + \alpha \I$, and let
\[
A\ci I^\alpha := \La W_\alpha\Ra\ci I^{-1} \La W\Ra\ci I A\ci I \La W\Ra\ci I \La W_\alpha\Ra\ci I^{-1}\,.
\]
If \cond3 is satisfied, then trivially 
\[
\frac{1}{|I_0|} \sum_{\substack{I\in\cD\\ I\subset I_0}}\La W_\alpha\Ra \ci IA\ci I^\alpha \La W_\alpha\Ra \ci I |I|\le B\La W\Ra\ci{I_0} \le B \La W_\alpha\Ra\ci{I_0}\,.
\]
If Theorem \ref{main} holds for invertible weights $W$, we get that for all $f\in L^2(W)\cap L^2$
\[
\sum_{I\in \mathcal{D}}\left\|(A\ci I^\alpha)^{1/2}\La W_\alpha f\Ra\ci I\right\|^2|I|\le A \|f\|^2_{L^2(W_\alpha)} . 
\]
Noticing that $\|f\|\ci{L^2(W_\alpha)} \to \|f\|\ci{L^2(W)}$, $  \La W_\alpha f\Ra\ci I \to \La W f\Ra\ci I$, $A\ci I^\alpha \to A\ci I$ as $\alpha\to0^+$ we immediately get \cond2 for all $f\in L^2(W)\cap L^2$ and for all finite sums (i.e.~for the case of finitely many non-zero $A\ci I$). The  case of infinite sums follows immediately, 
since the infinite sum is a limit of finite sums.

Since the estimate \cond2 holds on a dense set, extending the embedding operator by continuity we trivially get that  it holds for all  $f\in L^2(W)$.


%
%
%
%
%

\subsection{The Bellman functions}
By homogeneity we can assume without loss of generality that $B = 1$. As we discussed above, we only need to prove the implication \cond3$\implies$\cond1.

Following a suggestion by F.~Nazarov we will do so by a ``Bellman function with a parameter'' argument similar to one presented in \cite{NaPiTrVo}.
Denote
\begin{align}
\label{F_I}
F\ci I&=\|f\|^2\ci{L^2(I)} := \La |f|^2 \Ra\ci I\\
\label{M_I}
M\ci I&= \frac{1}{|I|} \displaystyle \sum_{J\subset I}\La W\Ra\ci J A\ci J\La W\Ra\ci J |J|\\
\label{x_I}
x\ci I&=\La W^{1/2}f\Ra\ci I.
\end{align}
For any real number $s$, $0\leq s<\infty$, define the Bellman function
\begin{align}
\label{B_s}
\cB_s(I)=\cB_s(F\ci I,x\ci I, M\ci I)= \Bigl\La \left(\La W\Ra \ci I+sM\ci I\right)^{-1}x\ci I, x \ci I\Bigr\Ra_{\F^d} .
\end{align}
Notice that $F\ci I$ is not involved in the definition of $\cB_s(I)$, but it will be used in the estimates. 

The functions $\cB\ci I$ satisfy the following properties:
%
\begin{enumerate}
\item The range property: $0\le \cB_s(I)\le F\ci I$;
\item The key inequality:
\begin{align}
\label{key}
\cB_s(I) + s R\ci I(s) &\le \sum_{I'\in\ch(I)}\frac{|I'|}{|I|} \cB_s(I'), \qquad s\ge 0,
\intertext{where}
\notag
R\ci I(s) &=  \| A\ci I^{1/2} \La W\Ra\ci I (\La W\Ra\ci I + s M\ci I)^{-1} x\ci I \|^2 .
\end{align}

\end{enumerate}
The inequality  $\cB_s(I)\ge 0$ is trivial, and the inequality $\cB_s(I)\leq F\ci I$ follows immediately from 
Lemma \ref{range} below. 
The key inequality \eqref{key} is a consequence of Lemma \ref{l:convex}, which we also prove below.

\subsection{From Bellman functions to the estimate}
Let us rewrite \eqref{key} as 
\[
|I| \cB_s(I) +|I| s R\ci I(s) \le \sum_{I'\in\ch(I)}|I'| \cB_s(I').  
\]
Then, applying this estimate to each $\cB_s(I')$, and then to each  descendant of each $I'$, we get, going $m$ generations down,
\begin{align*}
|I|\cB_s(I) + \sum_{\substack{I'\in\cD:I'\subset I \\ \rk I' < \rk I + m }} s  R\ci{I'}(s) |I'| 
&\le  \sum_{\substack{I'\in\cD:I'\subset I \\ \rk I' = \rk I + m }} |I'| \cB_s(I') 
 \le \| f\1\ci I\|\ci{L^2}^2; 
\end{align*}
here in the last inequality we used the fact that  $\cB_s(I) \le F\ci I = \La |f(\fdot)|^2\Ra\ci I =|I|^{-1}\|f\1\ci I\|\ci{L^2}^2$. 

Letting $m\to\infty$ and ignoring the non-negative term $|I|\cB_s(I)$ in the left hand side, we get that
\[
s \sum_{\substack{I'\in\cD:\, I'\subset I}}   R\ci{I'}(s) |I'| \le \| f\1\ci I\|\ci{L^2}^2. 
\]
Summing the above  inequality over all $I\in\cD_n$  we obtain
\[
s \sum_{\substack{I'\in\cD:\, \rk I'\ge n }}   R\ci{I'}(s) |I'| \le \| f\|\ci{L^2}^2. 
\]
Then, letting $n\to-\infty$ and replacing $I'$ by $I$, we arrive to the estimate
\begin{equation}\label{rbound}
s\sum_{I\in\cD} R\ci{I}(s) |I| \le \|f\|\ci{L^2}^2. 
\end{equation}

Note that $R\ci I(0) = \|A\ci I^{1/2} x\ci I\|^2 = \|A\ci I^{1/2} \La W^{1/2} f\Ra\ci I \|^2$, so to prove \cond1 we need to estimate $\sum_I R\ci I(0)|I|$, and we only have the estimate of $s\sum_{I} R\ci I(s)|I|$!

In the scalar case we trivially (since $M\ci I\le \La W\Ra\ci I$) have $R\ci I(0) \le 4 R\ci I(1)$, which gives us \cond1 with constant $4B$.

But due to non-commutativity, such estimate fails in the matrix case, so an extra trick is needed.

First, observe that it follows from the cofactor inversion formula that the entries of the matrix $(\La W\Ra\ci I + s M\ci I)^{-1}$ are of the form ${p_{j,k}(s)}/{Q(s)}$, where 
\[
Q(s)=Q\ci I (s)=\det(\La W\Ra\ci I + s M\ci I) 
\] 
is a polynomial of degree at most $d$, and
$p_{j,k}(s)$ are polynomials of degree at most $d-1$.

Therefore $R\ci I$ is a rational function of $s\in\R$, $R\ci I(s)={{P}\ci I(s)}/{|Q\ci I(s)|^2}$, where ${P}\ci I(s)$ is a polynomial of degree at most $2(d-1)$,  
${P}\ci I(s)\geq 0$ for $s\ge 0$.

Denote $p\ci I := |Q\ci I(0)|^{-2} P\ci I$. We will show later that estimate \eqref{rbound} implies the following statement.

\begin{lm}
\label{l:pbound} For $s>0$ and $p\ci I$ introduced above
\begin{align}
\label{pbound}
\sum_{I\in \cD} p\ci I(s) |I| \le s^{-1} (1+s)^{2d} \|f\|\ci{L^2}^2
\end{align}
\end{lm}
 
We will need the following simple lemma. 
\begin{lm}
\label{l:poly1}
Let $p$ be a polynomial satisfying for some $n\in\N$
\begin{align}
\label{poly-est}
|p(s)| \le s^{-1} (1+s)^n \qquad \forall s >0. 
\end{align}
Then 
\begin{align}
\label{poly-est-01}
|p(0)| \le e^2 n^2
\end{align}
\end{lm}
Applying  Lemma \ref{l:poly1} to partial sums in \eqref{pbound} with $n=2d$ we get that 
\begin{align*}
\sum_{I\in\cD} p\ci I(0) |I| \le 4e^2 d^2 \|f\|\ci{L^2}. 
\end{align*}
But $p\ci I (0) = R\ci I (0)$, so 
\begin{align*}
\sum_{I\in\cD} R\ci I(0) |I| \le 4e^2 d^2 \|f\|\ci{L^2}^2, 
\end{align*}
which is exactly statement \cond1 of Theorem \ref{main}. So, the theorem is proved (modulo lemmas \ref{l:pbound} and \ref{l:poly1}). \hfill\qed

\begin{proof}[Proof of Lemma \ref{l:pbound}]
By hypothesis,  $M\ci I\le \La W\Ra\ci I$, so the operator $\La W\Ra\ci I + s M\ci I$ is invertible for all $s$ such that $\re(s)> -1$. Thus the zeroes of $Q\ci I(s)$ are all in the half plane $\re(s)\leq -1$. Let $\lambda_1, \lambda_2, ..., \lambda_d$ be the roots of the polynomial $Q\ci I(s)$ counting multiplicity. We have
\[
\left|\frac{Q\ci I(s)}{Q\ci I (0)}\right|=\prod_{k=1}^d\left|\frac{s-\lambda_k}{\lambda_k}\right|.
\]
For a fixed $s>0$  the term $|s-\lambda_k|/|\lambda_k|$, $\re \lambda_k \le -1$  attains its maximum at $\lambda_k=-1$. Therefore $|Q(s) |/|Q(0)|\le (1+s)^d$ for all $s>0$, so we get from \eqref{rbound} that for all $s>0$
\begin{align*}
s\sum_{I\in\cD} p\ci I (s) |I| = s\sum_{I\in\cD} \frac{|Q\ci I(s)|^2}{|Q\ci I(0)|^2} R\ci I(s) |I| 
\le (1+s)^{2d} \cdot s\sum_{I\in\cD} R\ci I(s) |I| \le (1+s)^{2d}\|f\|\ci{L^2}^2, 
\end{align*}
which is equivalent to \eqref{pbound}. 
\end{proof}

\begin{proof}[Proof of Lemma \ref{l:poly1}]
Define the polynomial $q$ by $q(z) = p(z^2)$. Then for all $s\in \R$
\begin{align}
\label{q-est}
s^2 |q(s) |\le (1+s^2)^n. 
\end{align}
We want to show that for any polynomial $q$, the estimate \eqref{q-est} implies that $|q(0)|\le e^2n^2$. 
Since replacing a root of $q$ by its conjugate does not change $|q(s)|$ on the real line $\R$,  we can assume without loss of generality that all roots of $q$ are in the lower half-plane $\im z \le 0$. 

But then trivlally
\begin{align}
\label{q(it)}
|q(0)|\le |q(it) | \qquad\forall t>0. 
\end{align}

Note, that the condition \eqref{q-est} implies that degree of the polynomial $q$ is at most $2n-2$. 
Therefore, the functions $z\mapsto (1-iz)^{-2n} z^2q(z)$ and $F$, $F(z) =(1-iz)^{-2n}(1+z^2)^n$ belong to the  class $H^\infty(\C_+)$ of bounded analytic functions in the upper half-plane $\C_+:=\{z\in\C:\im z >0\}$ and satisfy 
\begin{align*}
|(1-is)^{-2n} s^2 q(s)| \le |F(s)| \qquad \forall s\in\R. 
\end{align*}
The inner-outer factorization of $H^\infty$ functions implies then that for all $z\in \C_+$
\begin{align}
\label{q<Fo}
|(1-is)^{-2n} z^2 q(z) | \le |F\ti o (z) | , 
\end{align}
where $F\ti o$ is the outer part of $F$. The outer part $F\ti o$ is easily computed: the function
\begin{align*}
F(z) = (1-iz)^{-2n}(1+z^2)^n = (1-iz)^{-2n} (1-iz)^n (1+iz)^n =  \frac{(1+iz)^n}{(1-iz)^n}= F\ti o (z) 
\end{align*}
 is a Blaschke product, so $F\ti o(z) \equiv 1 $. Substituting $t=1/n$ into \eqref{q<Fo} we get that 
\begin{align*}
n^{-2} |q(i/n)| \le (1+1/n)^{2n} \le e^2, 
\end{align*}
which by \eqref{q(it)} implies 
\begin{align*}
|p(0)| =|q(0)| \le |q(i/n)| \le e^2 n^2. 
\end{align*}
\end{proof}

\section{Verifying properties of \texorpdfstring{$\cB_s$}{B<sub>s}}

It remains to show that $\cB_s$ satisfies the Bellman function properties. The range property \cond1 is proved in the following proposition:

\begin{lm}\label{range}
For $\cB_s$ defined above in \eqref{B_s},
 $\cB_s(I)\le F\ci I$.
\end{lm}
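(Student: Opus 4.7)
The plan is to reduce the estimate to the case $s=0$ and then apply a Cauchy--Schwarz argument.

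First I would note that since $s\ge 0$ and $M\ci I$ is a sum of positive semidefinite matrices (the $A\ci J$ are PSD and $\La W\Ra\ci J A\ci J \La W\Ra\ci J$ is too, as $\La W\Ra\ci J$ is self-adjoint and PSD), we have $\La W\Ra\ci I + s M\ci I \ge \La W\Ra\ci I$. By operator monotonicity of $t\mapsto -t^{-1}$ on positive operators, this gives
\[
(\La W\Ra\ci I + s M\ci I)^{-1} \le \La W\Ra\ci I^{-1},
\]
where we may assume $\La W\Ra\ci I$ is invertible by the reduction already performed in the paper. Hence
\[
B_s(I) \le \Bigl\La \La W\Ra\ci I^{-1} x\ci I, x\ci I\Bigr\Ra,
\]
so it suffices to prove the same bound with $\La W\Ra\ci I^{-1}$ in place of $(\La W\Ra\ci I + s M\ci I)^{-1}$, i.e.\ the $s=0$ case.

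Next, I would apply Cauchy--Schwarz to the definition $x\ci I = \La W^{1/2} f\Ra\ci I$. For any test vector $e\in\F^d$,
\[
|\La x\ci I, e\Ra|^2 = \left|\frac{1}{|I|}\int_I \La f(x), W^{1/2}(x) e\Ra \dd x\right|^2 \le F\ci I \cdot \La \La W\Ra\ci I e, e\Ra,
\]
by the usual scalar Cauchy--Schwarz together with $\La W\Ra\ci I = \frac{1}{|I|}\int_I W(x)\dd x$. Choosing the special test vector $e = \La W\Ra\ci I^{-1} x\ci I$ gives
\[
\bigl|\La x\ci I,\La W\Ra\ci I^{-1} x\ci I\Ra\bigr|^2 \le F\ci I \cdot \bigl\La x\ci I, \La W\Ra\ci I^{-1} x\ci I\bigr\Ra,
\]
and dividing by the (non-negative) factor $\La \La W\Ra\ci I^{-1} x\ci I, x\ci I\Ra$ yields
\[
\Bigl\La \La W\Ra\ci I^{-1} x\ci I, x\ci I\Bigr\Ra \le F\ci I,
\]
which combined with the first step gives the desired $B_s(I) \le F\ci I$.

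There is no real obstacle here; the only delicate point is ensuring the inversion $(\La W\Ra\ci I + s M\ci I)^{-1}\le \La W\Ra\ci I^{-1}$ is legitimate, which follows from the earlier reduction that allows us to assume $W^{-1}$ is uniformly bounded (and hence $\La W\Ra\ci I$ is strictly positive). If $\La \La W\Ra\ci I^{-1} x\ci I, x\ci I\Ra = 0$ the conclusion is trivial, so the division step is justified.
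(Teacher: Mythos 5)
Your proof is correct and essentially the same as the paper's: both rest on the Cauchy--Schwarz bound $|\La x\ci I,e\Ra|^2\le F\ci I\,\La \La W\Ra\ci I e,e\Ra$ together with the positivity of $sM\ci I$. The only cosmetic difference is that the paper keeps $s$ and invokes the variational formula $\La A^{-1}x,x\Ra=\sup_e |\La x,e\Ra|^2/\La Ae,e\Ra$ (Lemma \ref{l:A^{-1}}), whereas you first reduce to $s=0$ by operator anti-monotonicity of the inverse and then plug in the test vector $e=\La W\Ra\ci I^{-1}x\ci I$, which is exactly the optimizer in that supremum.
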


\begin{proof}
Let $e\in\mathbb{F}^d$. Since $W$ is self-adjoint, an application of the Cauchy-Schwarz inequality gives
\[
\left|\fint_I\La W^{1/2} f, e\Ra \right|\leq \left(\fint_I\La f,f\Ra \right)^{1/2}\left(\fint_I\La W^{1/2}e,W^{1/2}e\Ra \right)^{1/2}.
\]
Therefore, recalling  the  notation \eqref{F_I}, \eqref{x_I}, we get that for any vector $e$,
\begin{equation}\label{eineq}
\frac{\left|\La x\ci I,e\Ra \right|^2}{\La \La W\Ra\ci I e,e\Ra }\leq F\ci I.
\end{equation}
Using Lemma \ref{l:A^{-1}} below we can write 
\begin{align*}
\La (\La W \Ra\ci I + s M\ci I)^{-1} x, x\Ra  & = \sup_{e\ne 0}
\frac{|\La x\ci I, e\Ra|^2}{\La (\La W\Ra\ci I + s M\ci I) e, e\Ra } \\
& \le \sup_{e\ne 0} \frac{\left|\La x\ci I,e\Ra \right|^2}{\La \La W\Ra\ci I e,e\Ra } \le F\ci I \, ,
\end{align*}
which means exactly that $\cB_s(I)\le F\ci I$. 
\end{proof}

\begin{lm}
\label{l:A^{-1}}
Let $A\ge 0$  be an invertible operator in a Hilbert space $\cH$. Then for any vector $x\in\cH$ 
\[
\La A^{-1} x, x\Ra =\sup_{e\in\cH:\,e\ne 0} \frac{|\La x, e\Ra|^2}{\La A e, e\Ra}
\]
\end{lm}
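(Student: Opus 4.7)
The plan is to prove the identity by a single application of the Cauchy--Schwarz inequality, using that $A$ is positive and invertible so that $A^{1/2}$ exists and is itself invertible with inverse $A^{-1/2}$. The key observation is that the positive form $\langle Ae, e \rangle$ can be rewritten as $\|A^{1/2} e\|^2$, and the pairing $\langle x, e \rangle$ can be split as $\langle A^{-1/2} x, A^{1/2} e \rangle$ using the self-adjointness of $A^{1/2}$. Then Cauchy--Schwarz in $\cH$ yields
\[
|\langle x, e \rangle|^2 = |\langle A^{-1/2} x, A^{1/2} e \rangle|^2 \le \|A^{-1/2} x\|^2 \cdot \|A^{1/2} e\|^2 = \langle A^{-1} x, x \rangle \langle Ae, e \rangle,
\]
which after dividing by $\langle Ae, e \rangle$ (nonzero for $e \ne 0$ since $A$ is invertible) gives the $\le$ direction of the desired equality.

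For the reverse inequality I would exhibit a vector attaining the supremum. If $x = 0$ both sides vanish, so assume $x \ne 0$ and take $e := A^{-1} x$. Then the numerator is $|\langle x, A^{-1} x \rangle|^2 = \langle A^{-1} x, x \rangle^2$, while the denominator is $\langle A A^{-1} x, A^{-1} x \rangle = \langle x, A^{-1} x \rangle = \langle A^{-1} x, x \rangle$, so the ratio equals exactly $\langle A^{-1} x, x \rangle$. This choice of $e$ is precisely the one that achieves equality in the Cauchy--Schwarz step above, since $A^{1/2} e = A^{-1/2} x$ is parallel to $A^{-1/2} x$.

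There is essentially no obstacle here; the identity is the standard duality statement for the quadratic form $\langle A \cdot, \cdot \rangle$ and its dual form $\langle A^{-1} \cdot, \cdot \rangle$. The only point worth noting is that the invertibility of $A$ is used to guarantee that $A^{-1/2}$ makes sense on all of $\cH$ and that the supremand is bounded; for a merely positive semidefinite $A$ the identity still holds with $A^{-1}$ interpreted as a Moore--Penrose inverse provided $x \in \ran A$, which is the form actually needed in Lemma \ref{range} where $A = \La W\Ra\ci I + M\ci I$ is applied to $x\ci I$.
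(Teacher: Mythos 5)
Your proof is correct and is essentially the same argument as the paper's: the paper writes $\La A^{-1}x,x\Ra=\|A^{-1/2}x\|^2$, expresses this norm as a supremum of $|\La A^{-1/2}x,a\Ra|^2/\|a\|^2$, and substitutes $a=A^{1/2}e$, which is exactly your Cauchy--Schwarz splitting $\La x,e\Ra=\La A^{-1/2}x,A^{1/2}e\Ra$ together with the extremizer $e=A^{-1}x$.
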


\begin{proof}
By definition, 
\begin{align*}
\La A^{-1}x,x\Ra &=\|A^{-1/2}x\|^2\\
&=\sup_{a\in\cH:\,\|a\|\neq 0}\frac{|\La A^{-1/2}x,a \Ra|^2}{\|a\|^2}\\
&=\sup_{a\in\cH:\,\|a\|\neq 0}\frac{|\La x,A^{-1/2}a \Ra|^2}{\|a\|^2}.
\end{align*}
Making the change of variables  $a=A^{1/2}e$ we conclude
\begin{align*}
\La A^{-1}x,x\Ra &=\sup_{e\in\cH:\,\|e\|\neq 0}\frac{|\La x,e \Ra|^2}{\La Ae,e\Ra}.
\end{align*}
\end{proof}

The main estimate \eqref{key} is the consequence of the following lemma:
\begin{lm}
\label{l:convex}
Let $\cH$ be a Hilbert space. For $x\in\cH$ and for $U$ being a bounded invertible positive operator in $\cH$ define 
\[
\phi(U,x):= \La U^{-1} x,x\Ra\ci{\cH} . 
\]
Then the function $\phi $ is convex, and, moreover, if 
\begin{align}
\label{mart-prop-01}
x_0=\sum_{k\ge 1} \theta_k x_k, \qquad \sd U:= U_0 -\sum_{k\ge 1} \theta_k U_k
\end{align}
where $0\le\theta_k\le 1$, $\sum_{k\ge 1}\theta_k =1$, then 
\begin{align}
\label{key-02}
\sum_{k\ge 1} \theta_k \phi(U_k, x_k) - \phi(U_0,x_0) \ge \La U_0^{-1}\sd U U_0^{-1} x_0,x_0\Ra\ci\cH \,.
\end{align}
\end{lm}

To see that this lemma implies \eqref{key}, fix $s>0$.  Denoting 
\begin{align*}
U\ci I^s =\La W\Ra\ci I + s M\ci I, \qquad x\ci I =\La W^{1/2} f\Ra\ci I, 
\end{align*}
we see that 
\[
B_s(I) =  \phi(U\ci I^s, x\ci I). 
\]
Let $I_k$, $k\ge 1$ be the children of $I$, and let $\theta_k=|I_k|/|I|$.  Notice that 
$\La W\Ra\ci I = \sum_{k\ge 1} \theta_k \La W\Ra\ci{I_k}$, $M\ci I =\sum_{k\ge 1} \theta_k M\ci{I_k} + s \La W\Ra\ci I A\ci I \La W\Ra\ci I$,  so
\[
U\ci I^s -\sum_{k\ge 1} \theta_k U\ci{I_k}^s =:\sd U^s = s \La W\Ra\ci I A\ci I \La W\Ra\ci I.
\]

Therefore, applying Lemma \ref{l:convex} with $U_0=U\ci I^s$, $x_0=x\ci I$, $U_k=U^s\ci{I_k}$, $x_k=x\ci{I_k}$, $\sd U = \sd U^s$ we  get  \eqref{key-02}, that translates   exactly to the estimate \eqref{key}. 
%
%
%

\begin{proof}[Proof of Lemma \ref{l:convex}]
The function $\phi$ and the right hand side of \eqref{key-02} are invariant under the change of variables
\begin{align}
\label{ChVar-01}
x\mapsto U_0^{-1/2}x, \qquad U\mapsto U_0^{-1/2}UU_0^{-1/2}, 
\end{align}
so it is sufficient to prove \eqref{key-02} only for $U_0=\I$.

In this case define  function ${\varPhi}(\tau)$, $0\le\tau\le1$ as
\[
\varPhi(\tau)=\sum \theta_k\Bigl\La \left(\I+\tau\sd U_k\right)^{-1}(x_0+\tau \sd x_k), (x_0+\tau \sd x_k)\Bigr\Ra_\cH -\La x_0,x_0\Ra\ci\cH ,
\]
where $\sd x_k=x_k-x_0$ and $\sd U_k=U_k-U_0=U_k-\I$. Using the power series expansion of $(\I +\tau\sd U)^{-1}$ we get
\begin{align*}
\varPhi(\tau)=&\tau \left(2\sum\theta_k\La \sd x_k,x_0\Ra\ci\cH -\sum\theta_k\La \sd U_k x_0,x_0\Ra \right)\\
&+\tau^2\left(\sum\theta_k\La \sd U_k^2 x_0,x_0\Ra +\sum\theta_k\La \sd x_k,\sd x_k\Ra -2\sum\theta_k\La \sd U_k x_0,\sd x_k\Ra\ci\cH \right)+ o(\tau^2)
\end{align*}
Notice that $\sum \theta_k\sd x_k=\sum \theta_k(x_k-x_0)=0$ and also $\sum \theta_k\sd U_k=-\sd U$. Hence
\begin{align}
\label{phi-02}
\varPhi(\tau)=\tau\La \sd U x_0,x_0\Ra +\tau^2\sum\theta_k\left(\left\|\sd U_kx_0\right\|^2+\left\|\sd x_k\right\|^2-2\La \sd U_k x_0,\sd x_k\Ra \right)+ o(\tau^3) ,
\end{align}
so $\varPhi'(0)=\La \sd U x_0,x_0\Ra$, $\varPhi''(0)\ge 0$ (the function $\varPhi$ is clearly real analytic as a composition of two real analytic functions, so all derivatives are well defined). 

Note, that the above formula \eqref{phi-02} holds for any choice of parameters  $U_k$, $x_k$ satisfying \eqref{mart-prop-01} with $U_0=\I$. Let us apply it to the case of $3$ points in the domain of $\phi$ with $x_0=(x_1+x_2)/2$, $\I=U_0=(U_1+U_2)/2$ (so $\sd U=0$). Recalling the definition of $\varPhi$, we see that in this case the  condition $\varPhi''(0)\ge 0$ means that the second differential of $\phi$ is non-negative in the corresponding direction at $U=\I$ and arbitrary $x$ (the function $\phi$ is real analytic, so all the differentials are well defined).

But we can pick any direction, so the second differential of $\phi$ is non-negative at $U=\I$ (and arbitrary $x$). 
The change of variables \eqref{ChVar-01} then implies that the second differential of $\phi$ is non-negative everywhere. This implies, in particular, that \emph{for any choice} of parameters $U_k$, $x_k$ (satisfying \eqref{mart-prop-01} with $U_0=\I$) we get  $\varPhi''(\tau)\ge0$, so $\varPhi $ is convex.


So, since for such choice of parameters $U_k$, $x_k$ the function $\varPhi $ is convex and $\varPhi(0)=0$, 
we conclude that 
\begin{align*}
\varPhi(1) \ge \varPhi'(0) = \La \sd U x_0, x_0\Ra\ci\cH, 
\end{align*}
and $\varPhi(1)$ is exactly the left hand side of \eqref{key-02}. 
%
%
%
%
\end{proof}

\end{document}